\documentclass[11pt,letterpaper]{amsart}

\usepackage[backend=biber,style=numeric,autocite=inline]{biblatex}

\AtBeginBibliography{}

\DeclareFieldFormat{postnote}{#1}			
\DeclareFieldFormat{multipostnote}{#1}		

\DeclareSourcemap{
  \maps[datatype=bibtex]{
    \map{
      \step[fieldset=doi, null]		
      \step[fieldset=url, null]		
      \step[fieldset=urlyear, null]	
      \step[fieldset=issn, null]		
      \step[fieldset=isbn, null]		
    }
  }
}

\usepackage{geometry}				

\usepackage{comment} 

\usepackage{hyphenat}

\usepackage{manfnt} 
\reversemarginpar 

\usepackage[svgnames]{xcolor}

\usepackage{url}

\usepackage{footmisc}

\usepackage{amsmath}
\usepackage{amsthm}
\usepackage{amssymb}

\allowdisplaybreaks 

\usepackage{amscd}

\usepackage{accents}

\usepackage{tikz-cd}

\usepackage{bbm}

\usepackage{mathrsfs} 

\usepackage[OMLmathsfit]{isomath}

\usepackage{mathabx}

\usepackage{leftindex}

\usepackage{enumitem}
\setlist{noitemsep}

\usepackage{stackengine}

\usepackage{hyperref}

\newtheoremstyle{exercise}
  {3pt} 
  {3pt} 
  {\small\rmfamily} 
  {
} 
  {\rmfamily\scshape} 
  {.} 
  {.5em} 
  {} 

\newtheoremstyle{newplain}
  {5pt}
  {5pt}
  {\itshape}
  {}
  {\rmfamily\scshape}
  { ---}
  {.5em}
  {}

\newtheoremstyle{newremark}
  {5pt}
  {5pt}
  {\rmfamily}
  {}
  {\rmfamily\scshape}
  { ---}
  {.5em}
  {}

\theoremstyle{newplain}
\newtheorem*{Theorem*}{Theorem} 

\theoremstyle{newplain}
\newtheorem{Theorem}{Theorem}

\newtheorem{Proposition}[Theorem]{Proposition}
\newtheorem{Conjecture}[Theorem]{Conjecture}
\newtheorem{Definition}[Theorem]{Definition}

\theoremstyle{newremark}
\newtheorem{Empty}[Theorem]{}

\newtheorem{Claim}[Theorem]{Claim}

\theoremstyle{exercise}

\numberwithin{Theorem}{section}

\newcommand{\R}{\mathbb{R}}

\newcommand{\calB}{\mathscr{B}}
\newcommand{\calC}{\mathscr{C}}
\newcommand{\calD}{\mathscr{D}}

\newcommand{\calF}{\mathscr{F}}

\newcommand{\calL}{\mathscr{L}}
\newcommand{\calM}{\mathscr{M}}
\newcommand{\calN}{\mathscr{N}}

\newcommand{\calP}{\mathscr{P}}

\newcommand{\bF}{\mathbf{F}}

\newcommand{\bM}{\mathbf{M}}

\DeclareMathOperator{\rmLip}{\mathrm{Lip}}            

\newcommand{\lseg}{\boldsymbol{[}\!\boldsymbol{[}}
\newcommand{\rseg}{\boldsymbol{]}\!\boldsymbol{]}}

\def\XXint#1#2#3{{%
\setbox0=\hbox{$#1{#2#3}{\int}$}
\vcenter{\hbox{$#2#3$}}\kern-.5\wd0}}

\renewcommand{\em}{\bf}

\renewcommand{\leq}{\leqslant}
\renewcommand{\geq}{\geqslant}

\renewcommand{\subset}{\subseteq}

\makeatletter
\ifcsname phantomsection\endcsname
    \newcommand*{\qrr@gobblenexttocentry}[5]{}
\else
    \newcommand*{\qrr@gobblenexttocentry}[4]{}
\fi
\newcommand*{\addsubsection}{%
    \addtocontents{toc}{\protect\qrr@gobblenexttocentry}%
    \subsection}
\makeatother

\begin{document}



\title[]{Another simple proof of the 1-dimensional flat chain conjecture}

\author[Ph. Bouafia]{Philippe Bouafia}
\author[Th. De Pauw]{Thierry De Pauw}

\address{Fédération de Mathématiques FR3487 \\
  CentraleSupélec \\
  3 rue Joliot Curie \\
  91190 Gif-sur-Yvette
}

\email{philippe.bouafia@centralesupelec.fr}

\address{Institute for Theoretical Sciences / School of Science, Westlake University\\
No. 600, Dunyu Road, Xihu District, Hangzhou, Zhejiang, 310030, China}

\email{thierry.depauw@westlake.edu.cn}

\keywords{metric currents, flat chains, Lipschitz-free space}

\subjclass[2020]{Primary 49Q15, 49Q20; Secondary 53C23, 46B20}


\begin{abstract}
We provide a short proof of the 1-dimensional flat chain conjecture.
\end{abstract}

\maketitle





\section{Foreword}

Flat chains were introduced by Whitney; see \cite{WHI.47} for an outline and \cite{WHITNEY} for the detailed account. Federer and Fleming interpreted flat chains with compact support as a special class of currents in the sense of de Rham; see \cite[7.4]{FED.FLE.60} and \cite[4.1.18]{FEDERER}.
Metric currents, introduced in \cite{ambrosio-kirchheim} by Ambrosio and Kirchheim, extend the notion of current to arbitrary complete metric spaces, though, these may lack a differentiable structure. 

In Euclidean space $\R^d$, metric $m$-currents are conjectured \cite[\S11]{ambrosio-kirchheim} to correspond to classical flat $m$-chains of finite mass; this is known as the \textit{flat chain conjecture}. In section 3, we give a new proof of this conjecture in the case $m=1$. 
Specifically, we prove:

\begin{Theorem}
\label{thm:main}
  Let $T$ be a metric $1$-current in $\R^d$. The associated de Rham current is a flat chain of finite mass.
\end{Theorem}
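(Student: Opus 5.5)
Let $T$ be a metric $1$-current in $\R^d$, with finite mass $\|T\|$, a finite Borel measure, which we may assume has compact support. A standard truncation and approximation argument reduces to this case, since flat chains of finite mass are closed under limits in mass. The plan is to express the associated de Rham current $\tilde T(\omega)=\sum_i T(\omega_i\, d\pi_i)$ as a mass-limit of normal currents, or equivalently to write it as $\tilde T = \vec{T}\,\|T\|$ with $\vec T$ a unit vector field. We then show that the component of $\vec T$ not captured by the "rectifiable/normal part" vanishes. The natural tool in dimension one is duality with the Lipschitz-free space. A metric $1$-current acts on pairs $(f,g)$ through $g\mapsto T(f\,dg)$, and for fixed $f$ this is a weak$^*$-continuous functional on $\rmLip(\R^d)$. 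Hence it corresponds to an element of the Lipschitz-free space, or of its bidual.

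\textbf{Step 1.} We use the characterization that a finite-mass de Rham $1$-current is a flat chain iff it is a mass-limit of normal currents. By the Alberti--Marchese decomposability bundle, this holds iff $\vec T(x)\in V(\|T\|,x)$ for $\|T\|$-a.e.\ $x$. So it suffices to show that $\vec T(x)$ lies in the decomposability bundle of $\mu=\|T\|$.

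\textbf{Step 2.} We exploit the defining properties of metric currents, namely locality and the chain rule. Alberti--Marchese showed that every Lipschitz $g$ is differentiable $\mu$-a.e.\ along $V(\mu,x)$ only, and that there is a Lipschitz $g$ non-differentiable in every direction outside $V(\mu,x)$. Suppose $\vec T\notin V$ on a set $E$ of positive measure. We construct Lipschitz $g_k\to 0$ uniformly with $\rmLip g_k$ bounded, but with $\int_E\langle \vec T, \nabla g_k\rangle\,d\mu\not\to 0$. For smooth $g_k$ this integral equals $\tilde T(\mathbbm 1_E\, dg_k)$. After replacing $\mathbbm 1_E$ by smooth approximants $f$, we obtain $T(f\,dg_k)\not\to0$, which contradicts the continuity axiom of metric currents, since $g_k\to 0$ pointwise with bounded Lipschitz constants. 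The $g_k$ are the smooth oscillating functions built from the Alberti--Marchese construction: they have gradients nearly parallel to a fixed direction $e\notin V$ on most of $E$ and sup norm tending to $0$. One then mollifies.

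\textbf{Step 3.} With $\vec T\in V(\mu,\cdot)$ a.e., we invoke the Alberti--Marchese theorem that such measures' currents are flat limits of normal currents. We also check that finite mass gives a flat chain of finite mass.

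The main obstacle is Step 2. We need an oscillating sequence whose gradients correlate with $\vec T$ in the normal-to-$V$ directions, uniformly on a positive-measure set, while remaining smooth and uniformly Lipschitz. This requires a careful measurable selection and a localization of the Alberti--Marchese non-differentiability construction. Alternatively, we could use the Lipschitz-free space: by Smirnov's decomposition, $\partial$-free pieces decompose into curves, which would give a more elementary route.
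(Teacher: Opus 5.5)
Your overall strategy is viable: reduce to showing $\vec T(x)\in V(\|T\|,x)$, then derive this from the continuity axiom by contradiction. As written, though, the proof has a genuine gap. Step 2 carries the entire content of the theorem, and it is only described, not proved. Moreover, the property you attribute to the $g_k$ is not what the available construction gives.

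Roughly, the Alberti--Cs\"ornyei--Preiss/Alberti--Marchese lemma says the following. If a compact $K$ is null for a cone $C=C(e,\alpha)$, then for each $\varepsilon>0$ there is a smooth $g$ with $\|g\|_\infty\le\varepsilon$, $\rmLip g\le L$ and $\partial_e g\ge 1$ near $K$, where $L$ depends on the cone. This $L$ must blow up as the cone narrows, and $\nabla g$ need not be nearly parallel to $e$. To see this, test on a segment whose direction lies just outside $C$. Since $|g|\le\varepsilon$, the derivative along the segment averages almost zero, which is incompatible with $\nabla g$ staying close to $e$ there.

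To close Step 2 you would need three things.
\begin{enumerate}
\item A lemma turning ``$\vec T\notin V(\mu,\cdot)$ on a set of positive measure'' into a compact $K$ with $\mu(K)>0$, a direction $e$ and a cone $C$ such that $K$ is $C$-null. This is itself a nontrivial statement about the bundle.
\item The parameters chosen in the right order. The cone, hence $L$, is dictated by how far $\vec T$ is from $V$. Only afterwards can you localize so that $|\vec T-e|\,L<1/2$ on $K$, which gives $\langle\vec T,\nabla g_k\rangle\ge 1/2$ there. Alternatively, take $e\perp V$ and prove a version of the lemma with $\nabla g$ in a narrow dual cone.
\item Testing with $f=\mathbbm{1}_K$ itself. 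Off $K$ the integrand is only bounded by $L$, and control on ``most of $E$'' does not rule out cancellation.
\end{enumerate}
None of this is carried out, and you flag it yourself as the main obstacle. Step 3 likewise rests on a nontrivial converse ($\vec T\in V(\mu,\cdot)$ implies flat chain), which you cite rather than justify.

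The idea you are missing is what makes the paper's proof a page long: the decomposability bundle is not needed at all. The paper's key use of the continuity axiom is to place $\partial T\colon g\mapsto T(1,g)$ in the Lipschitz-free space $\calF(\R^d)$. The operator $-\operatorname{div}\colon L^1(\R^d;\R^d)\to\calF(\R^d)$ has adjoint $g\mapsto\nabla g$, which is an isometric embedding $\rmLip_0(\R^d)\to L^\infty(\R^d;\R^d)$. By Hahn--Banach and the closed range theorem, $-\operatorname{div}$ is onto. Hence $\partial T=-\operatorname{div}u$ for some $u\in L^1$, and $T-\lseg u\rseg$ has zero boundary, so it is normal and its dR current is FF normal. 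Meanwhile $\calL^d\wedge u$ is a mass limit of smooth compactly supported normal currents. Your closing remark about Lipschitz-free spaces and Smirnov's decomposition points in this direction but misses the mechanism: nothing is decomposed, the boundary is simply cancelled by an $L^1$ vector field. If completed, your route would give the finer pointwise information $\vec T(x)\in V(\|T\|,x)$. The paper's route buys brevity and uses only soft Banach-space duality.
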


The relevant notions are defined in the body of the paper. The one-dimensional case of the flat chain conjecture was first established by Schioppa in \cite{Schioppa2016MetricCurrents}. More recently, several alternative proofs have been obtained by Alberti, Bate, Marchese \cite{AlbertiBateMarchese2025}, De Masi, Marchese \cite{DeMasiMarchese2025}, Marchese, Merlo \cite{MarcheseMerlo2026FlatChain}, and Arroyo-Rabasa and Bouchitté \cite{ARR.BOU.25}. Recently, Bate, Caputo, Tak\'a\v{c}, Valentine and Wald proved a generalization of the one-dimensional flat chain conjecture in complete quasiconvex metric spaces in~\cite{BateCaputoTakacValentineWald2025}.
The more difficult zero-codimensional case where $m = d$ was proved by De Philippis and Rindler in~\cite{DePhilippisRindler2016Afree}. 

Let us also mention that a variant of the flat chain conjecture, in which metric currents are understood in the sense of Lang \cite{Lang2011LocalCurrents} and no finite mass assumption is imposed, was disproved by Tak\'a\v{c} \cite{Takac2025LangFailure} in most cases (with the only exceptions occurring when $m = 0$ or $m = d = 1$).

Our proof below fits on one page. It uses the validity of the conjecture for metric normal currents already observed in \cite{ambrosio-kirchheim}, the identification of a space of 0-dimensional metric currents with a Lipschitz-free space, and the closed range theorem. Lipschitz-free spaces were introduced in Banach space theory to linearize metric spaces and their non-linear Lipschitz mappings; see \cite{GodefroyKalton2003LipschitzFree} or \cite[Chapter~14]{albiac_kalton_2016}.

\section{Preliminaries}

\begin{Empty}[Lipschitz-free space]
  Let $(E,d)$ be a complete metric space with a distinguished base point $o \in E$. 
We denote by $\rmLip_0(E)$ the space of all Lipschitz functions $f \colon E \to \R$ such that $f(o)=0$. Equipped with the norm $\|f\|_{\rmLip_0(E)} := \rmLip(f)$, this is a Banach space.

We now recall how to construct a canonical predual of $\rmLip_0(E)$. For each $x \in E$, define the evaluation functional
\[
\delta_x \colon \rmLip_0(E) \to \R, 
\qquad f \mapsto f(x).
\]
Each $\delta_x$ is linear and continuous, and the map
\[
\delta \colon E \to \rmLip_0(E)^*, 
\qquad x \mapsto \delta_x,
\]
is an isometric embedding of $(E,d)$ into $\rmLip_0(E)^*$.
The \emph{Lipschitz-free space} over $E$ is defined as the smallest closed linear subspace that contains the range of $\delta$:
\[
\calF(E) := \operatorname{cl}\operatorname{span}\, \delta(E)
\subset \rmLip_0(E)^*,
\]
equipped with the norm inherited from $\rmLip_0(E)^*$. In a sense, $\calF(E)$ is a linearization of the metric space $E$.

We easily infer from the definition that $\calF(E)$ is separable whenever $E$ is.
The following fundamental properties are also known to hold:
\begin{itemize}
    \item The map
    $
    \Upsilon \colon \calF(E)^* \to \rmLip_0(E), 
     \alpha \mapsto \alpha \circ \delta,
    $
    is an isometric isomorphism.

    \item A sequence $(f_n)$ in $\rmLip_0(E)$ converges weakly* to $f$ (with respect to the duality with $\calF(E)$) if and only if it is bounded in $\rmLip_0(E)$ and $f_n \to f$ pointwise.
\end{itemize}
For the reader's convenience, we provide a proof of both claims.
\end{Empty}

\begin{proof}
    For any $\alpha \in \calF(E)^*$ and $x, y \in E$, we have
    \[
    |\Upsilon(\alpha)(y) - \Upsilon(\alpha)(x)| = |\alpha(\delta_y - \delta_x)|
    \leq \|\alpha\|_{\calF(E)^*} \| \delta_y - \delta_x \|_{\calF(E)} \leq \|\alpha\|_{\calF(E)^*} d(x, y).
    \]
    Hence $\Upsilon(\alpha)$ is a Lipschitz map with constant $\rmLip \Upsilon(\alpha) \leq \|\alpha\|_{\calF(E)^*}$. Clearly $\delta_o = 0$, thus $\Upsilon(\alpha)  \in \rmLip_0(E)$.
    
    Now suppose that $\Upsilon(\alpha) = 0$. Then $\alpha = 0$ on the dense subspace spanned by $\delta(E)$, and therefore $\alpha = 0$ everywhere. This establishes the injectivity of $\Upsilon$.

    As for surjectivity, let us pick $f \in \rmLip_0(E)$. We define a linear map $\alpha \colon \operatorname{span} \delta(E) \to \R$ by
    \[
    \alpha\left( \sum_{x}  \lambda_x \delta_{x}\right) = \sum_{x} \lambda_x f(x). 
    \]
    By definition of $\calF(E)$, this map extends by density to an element of $\calF(E)^*$ of norm $\|\alpha\|_{\calF(E)^*} \leq \rmLip (f)$. Clearly, $f = \Upsilon(\alpha)$ and, therefore, $\|\alpha\|_{\calF(E)^*} = \rmLip \Upsilon(\alpha)$.

    Finally, the second claim is an easy consequence of the uniform boundedness principle, that guarantees the boundedness of weak* convergent sequences, and the duality between $\calF(E)$ and $\rmLip_0(E)$.
\end{proof}

\begin{Empty}[Metric currents]
Let us denote by $\calB^\infty(E)$ the linear space of bounded Borel functions on $E$. A \emph{metric $1$-current} in $E$ is a bilinear map $T \colon \calB^\infty(E) \times \rmLip(E) \to \mathbb{R}$ satisfying the following properties:
\begin{itemize}
    \item \textbf{Locality.} If $\pi$ is constant on $\{f \neq 0\}$ then $T(f, \pi) = 0$.    
    \item \textbf{Continuity.} For every $f \in \calB^\infty(E)$ and every sequence $(\pi_n)$ in $\rmLip(E)$ such that $\pi_n \to \pi$ pointwise and with uniformly bounded Lipschitz constants, one has
    \[
    T(f, \pi) 
    = \lim_{n \to \infty} T(f, \pi_n).
    \]
    \item \textbf{Finite mass.} There exists a finite Borel measure $\mu$ on $E$ such that for all $(f, \pi) \in \calB^\infty(E) \times \rmLip(E)$,
    \begin{equation}
    \label{eq:finite_mass}
    |T(f, \pi)| 
    \leq \rmLip \pi \int_{E} |f| \, d\mu.
    \end{equation}
\end{itemize}
The smallest measure $\mu$ satisfying~\eqref{eq:finite_mass} is denoted $\|T\|$. The linear space of metric $1$-currents in $E$ will be denoted $\calM_1(E)$.

Additionally, we say that a metric $1$-current $T \in \calM_1(E)$ is \emph{normal} whenever there is a finite Borel measure $\nu$ on $E$ such that
\[
|T(1, f)| \leq \int_E |f| \, d\nu
\]
for every bounded Lipschitz function $f$ on $E$. The least such measure $\nu$ is denoted $\| \partial T \|$. The linear subspace of normal $1$-currents in $E$ is denoted $\calN_1(E)$.
\end{Empty}




\begin{Proposition}
    Any integrable vector field $u \in L^1(\R^d; \R^d)$ induces a metric $1$-current in $\R^d$
    \[
    \lseg u \rseg \colon (f, \pi) \mapsto \int_{\R^d} (uf) \cdot \nabla \pi.
    \]
\end{Proposition}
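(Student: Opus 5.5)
We check well-definedness, bilinearity and the three axioms of a metric $1$-current directly. By Rademacher's theorem every $\pi \in \rmLip(\R^d)$ is differentiable $\mathcal{L}^d$-almost everywhere, with $|\nabla \pi| \leq \rmLip \pi$ a.e. Since $f$ is bounded and $u \in L^1$, the integrand $(uf)\cdot\nabla\pi$ is integrable, and
\[
\Bigl|\int_{\R^d} (uf)\cdot \nabla \pi\Bigr| \leq \rmLip \pi \int_{\R^d} |f|\,|u| \, d\mathcal{L}^d .
\]
Bilinearity is clear from the linearity of the integral and of $\pi \mapsto \nabla\pi$. The display above gives the \textbf{finite mass} axiom with $\mu := |u|\,\mathcal{L}^d$, a finite Borel measure because $u \in L^1$.

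For \textbf{locality}, suppose $\pi$ is constant on $A := \{f \neq 0\}$. The key fact is that for a Lipschitz function, $\nabla \pi = 0$ almost everywhere on any Borel set where $\pi$ is constant. Indeed, at a Lebesgue density point $x$ of $A$ where $\pi$ is differentiable, $\pi - \pi(x)$ vanishes on $A$. So if $\nabla\pi(x) \neq 0$, there would be a cone around the direction $\nabla \pi(x)$ on which $|\pi(y)-\pi(x)| \geq c|y-x|$ for small $|y-x|$. Such a cone has positive density at $x$, which contradicts $x$ being a density point of $A$. Hence $f\,\nabla\pi = 0$ a.e. and $T(f,\pi)=0$. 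Alternatively, one can apply the classical fact that $\nabla \pi = 0$ a.e. on level sets of $\pi$ to $g := \pi - c$.

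\textbf{Continuity} is the main obstacle, because pointwise convergence $\pi_n \to \pi$ with $\sup_n \rmLip \pi_n \leq L$ does not give pointwise convergence of the gradients; it only gives $\nabla \pi_n \to \nabla \pi$ weakly* in $L^\infty(\R^d;\R^d)$. To show this, take $\varphi \in C^\infty_c(\R^d;\R^d)$ and integrate by parts: $\int \nabla\pi_n\cdot\varphi = -\int \pi_n \operatorname{div}\varphi$. On the compact support of $\varphi$, the $\pi_n$ are equi-Lipschitz and converge pointwise, hence uniformly by Arzel\`a--Ascoli. So $\int \nabla\pi_n \cdot \varphi \to \int \nabla\pi\cdot\varphi$. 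Since $\|\nabla \pi_n\|_\infty \leq L$ and $C^\infty_c$ is dense in $L^1(\R^d;\R^d)$, this convergence extends to every test field in $L^1$. The field $uf$ lies in $L^1(\R^d;\R^d)$, so $T(f,\pi_n) = \int (uf)\cdot\nabla\pi_n \to \int (uf)\cdot\nabla\pi = T(f,\pi)$. Note that this step needs only the uniform Lipschitz bound and pointwise convergence, exactly as the continuity axiom provides.
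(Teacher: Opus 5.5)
Your proof is correct and follows the paper's route for continuity, the only nontrivial axiom: approximate $uf$ in $L^1$ by some $\varphi \in C^\infty_c(\mathbb{R}^d;\mathbb{R}^d)$, integrate by parts, and absorb the approximation error using the uniform Lipschitz bound. The differences are minor: you use uniform convergence on the support of $\varphi$ where the paper uses dominated convergence after reducing to $\pi = 0$, and your density-point argument for locality and your choice $\mu = |u|\,\mathcal{L}^d$ for finite mass correctly fill in checks the paper treats as obvious or leaves implicit.
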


\begin{proof}
    The map $\lseg u \rseg$ is well-defined, according to Rademacher's theorem. It is most obviously bilinear and satisfies the locality axiom. As for continuity, we fix $\varepsilon > 0$ and a sequence $(\pi_n)$ of Lipschitz functions that converge pointwise to $0$ with $L := \sup \rmLip \pi_n < \infty$. There is a map $\varphi \in C^\infty_c(\R^d; \R^d)$ such that $\|uf - \varphi \|_{L^1} < \varepsilon$. By an integration by part, we have
    \[
    |\lseg u \rseg (f, \pi_n)| \leq L \|fu - \varphi\|_{L^1} + \left| \int_{\R^d} \pi_n \operatorname{div}\varphi \right|
    \]
    By the Lebesgue dominated convergence theorem, we infer that
    \[
    \limsup_{n \to \infty} |\lseg u \rseg(f, \pi_n)| \leq L \varepsilon.
    \]
    We conclude with the arbitrariness of $\varepsilon$.
\end{proof}


\begin{Empty}[Boundary of a metric $1$-current]
  Suppose that $E$ is separable, and fix an arbitrary point $o \in E$. The boundary of a metric $1$-current $T \in \calM_1(E)$ is usually defined as a metric $0$-current $\partial T$. Here, we take a slightly different approach and define $\partial T$ as an element of the Lipschitz-free space $\calF(E)$ via
\[
\partial T \colon f \in \rmLip_0(E) \mapsto T(1, f).
\]
The finite mass assumption ensures that $\partial T \in \rmLip_0(E)^*$, while the continuity axiom guarantees that $\partial T$ is sequentially weak* continuous. What we actually need, however, is full weak* continuity, so that $\partial T$ indeed belongs to $\calF(E)$. This stronger continuity follows from the separability of the predual $\calF(E)$, thanks to classical results of Dieudonné on the bounded weak* topology in Banach spaces. For details, see \cite[Corollary~2.7.5]{Megginson1998}.

Alternatively, when $E = \R^d$ (the case of interest here), one may apply \cite[4.1.3]{DEP.26} to identify $\calF(\R^d)$ with the space of linear forms $F : \rmLip_0(\R^d) \to \R$ that are continuous with respect to the localized topology $\calP_{\rmLip}$ defined there, and then apply \cite[3.1(A)]{DEP.26a} to characterize $\calP_{\rmLip}$-convergent sequences, and finally apply \cite[3.2(B)]{DEP.26a} to infer that the $\calP_{\rmLip}$-continuity of $F$ is equivalent to its sequential $\calP_{\rmLip}$-continuity. 
This, however, is merely another convenient point of view, since, in this case, $\calP_{\rmLip}$ is, indeed, a bounded weak* topology \cite[7.4]{DEP.26a}.
\end{Empty}

\begin{Empty}[dR currents and flat chains]
    For each integer $m \geq 0$, we denote by $\calD^m(\R^d)$ the linear space of compactly supported smooth $m$-forms $\R^d \to \bigwedge^m \R^d$. A {\em de Rham $m$-current} (shortened to dR $m$-current) is a continuous linear functional $\boldsymbol{T} \colon \calD^m(\R^d) \to \R$, where $\calD^m(\R^d)$ is equipped with the locally convex topology described in \cite[4.1.1]{FEDERER}. We adopt the convention that dR currents are denoted in boldface.

The \emph{mass} of a dR current $\boldsymbol{T}$ is defined by
\[
\bM(\boldsymbol{T}) = \sup \left\{ \boldsymbol{T}(\omega) : \| \omega(x) \| \leq 1 \text{ for all } x \in \R^d \right\}.
\]
Here, $\|\cdot\|$ denotes the comass norm on $\bigwedge^m \R^d$ as in \cite[1.8.1]{FEDERER}. Any other choice of norm would not affect the results presented here.

If $m \geq 1$, the \emph{boundary} of $\boldsymbol{T}$ is the de Rham $(m-1)$-current $\boldsymbol{\partial T}$ defined by
$\boldsymbol{\partial T}(\omega) = \boldsymbol{T}(d\omega)$.
We say that $\boldsymbol{T}$ is a \emph{Federer-Fleming normal $m$-current} (abbreviated as FF normal $m$-current) if
$\bM(\boldsymbol{T}) + \bM(\boldsymbol{\partial T}) < \infty$.
When $m = 0$, we say that $\boldsymbol{T}$ is normal if it has finite mass.

Finally, the \emph{flat norm} of a dR $m$-current $\boldsymbol{T}$ is defined by
\[
\bF(\boldsymbol{T}) = \sup \left\{ \boldsymbol{T}(\omega) : \| \omega(x) \| \leq 1 \text{ and } \| d\omega(x) \| \leq 1 \text{ for all } x \in \R^d \right\}.
\]
We say that $\boldsymbol{T}$ is a \emph{flat $m$-chain} if there exists a sequence $(\boldsymbol{T}_n)$ of FF normal $m$-currents such that
$\bF(\boldsymbol{T}_n - \boldsymbol{T}) \to 0$.

We stress that our definitions differ from Federer's terminology in that we do not require normal currents or flat chains to have compact support.
\end{Empty}

\begin{Empty}
    To each metric $1$-current $T$, one can associate a dR current, namely the functional $\boldsymbol{T} \colon \calD^1(\R^d) \to \R$ defined by
\[
\boldsymbol{T}\left( \sum_{k=1}^d f_k \, dx_k\right) = \sum_{k=1}^d T(f_k, x_k),
\]
where $x_k$ are the coordinate functions. The dR current $\boldsymbol{T}$ has automatically finite mass: this comes from the finite mass assumption for the metric current $T$, as one easily proves $\bM(\boldsymbol{T}) \leq \sqrt{d} \| T \|(\R^d)$.

If, moreover, the metric current $T$ belongs to $\calN_1(\R^d)$, then the dR current $\boldsymbol{T}$ is FF normal. Indeed, for all $\omega \in \calD^0(\R^d)$:
\[
\boldsymbol{\partial T}(\omega) = \boldsymbol{T}(d\omega) = \sum_{k=1}^d T \left( \frac{\partial \omega}{\partial x_k}, x_k\right) = T(1, \omega) \leq \| \omega\|_\infty \|\partial T \|(\R^d),
\]
where the last equality is obtained by applying the chain rule for metric currents \cite[Theorem~3.5]{ambrosio-kirchheim}. This implies that $\bM(\boldsymbol{\partial T}) \leq \| \partial T \|(\R^d)$.

For $u \in L^1(\R^d; \R^d)$, the dR current associated to $\lseg u \rseg$ is denoted $\calL^d \wedge u$, and 
\[
\calL^d \wedge u (\omega) = \int_{\R^d} \langle u(x), \omega(x) \rangle \, d x
\]
for all $\omega \in \calD^0(\R^d)$. We claim that $\calL^d \wedge u$ is a flat $1$-chain. Indeed, if $(u_n)$ is a sequence in $\calC^\infty_c(\R^d; \R^d)$ that converges to $u$ in $L^1$, then we easily establish that $\calL^d \wedge u_n \to \calL^d \wedge u$ in mass norm, and thus in flat norm. Since all $\calL^d \wedge u_n$ are normal $1$-currents, this proves the claim.
\end{Empty}

\section{Proof}

The argument below is inspired by the second author’s work on the divergence equation; see \cite{DEP.26}. Related arguments based on the closed range theorem have also appeared independently in the literature on the flat chain conjecture, notably in \cite{Takac2025LangFailure} and \cite{BateCaputoTakacValentineWald2025}.

\begin{proof}[Proof of Theorem~\ref{thm:main}]
The following commutative diagram illustrates our proof.
\begin{center}
\begin{tikzcd}
L^1(\R^d ; \R^d) \arrow[r,"\lseg \cdot \rseg"] \arrow[dr,swap,"-\operatorname{div}"] & \calM_1(\R^d) \arrow[d,"\partial"] \\
& \calF(\R^d)
\end{tikzcd}
\end{center}
The map $-\operatorname{div}$ is defined by $-\operatorname{div} = \partial \circ \lseg \cdot \rseg$. One easily checks that it is defined by
\[
-\operatorname{div}(u)(f) = \int_{\R^d} u \cdot \nabla f
\]
and this ensures its continuity, as $\|-\operatorname{div}(u)\|_{\calF(\R^d)} = \|-\operatorname{div}(u)\|_{\rmLip_0(\R^d)^*} \leq \| u \|_{L^1}$. Its adjoint is the map 
\[
-\operatorname{div}^* \colon \rmLip_0(\R^d) \to L^\infty(\R^d; \R^d), \qquad f \mapsto \nabla f
\]
It is clear that $-\operatorname{div}^*$ defines an isometric embedding. From this, we can deduce two properties of $-\operatorname{div}$:
\begin{itemize}
    \item Since $-\operatorname{div}^*$ is injective, $-\operatorname{div}$ has dense image, by a corollary of the Hahn-Banach theorem.
    \item Since the image of $-\operatorname{div}^*$ is closed, the image of $-\operatorname{div}$ is also closed, by the closed range theorem (see \cite[4.14]{Rudin}).
\end{itemize}
Hence, combining density and closedness, we conclude that $-\operatorname{div}$ is onto.

By surjectivity of $-\operatorname{div}$, there exists $u \in L^1(\R^d; \R^d)$ such that $\partial T = -\operatorname{div}(u)$. It follows that $\partial(T - \lseg u \rseg) = 0$, so that $T - \lseg u \rseg$ belongs to $\calN_1(\R^d)$. Indeed, for every bounded Lipschitz map $f$ on $\R^d$, one has
\[
(T - \lseg u \rseg)(1, f) = f(0) (T - \lseg u \rseg)(1, 1) + \partial(T - \lseg u \rseg)(f - f(0)) = 0,
\]
where the first term vanishes by locality. 

The metric current $T$ admits the following decomposition:
\[
T = (T - \lseg u \rseg) + \lseg u \rseg.
\]
The first term is a current in $\calN_1(\R^d)$ and corresponds to a normal FF current, hence a flat chain of finite mass. The second term correspond to the FF current $\calL^d \wedge u$, that is also a flat chain of finite mass. The conclusion follows.
\end{proof}


\printbibliography




\end{document}